\newtheorem*{rep@theorem}{\rep@title}
\newcommand{\newreptheorem}[2]{
\newenvironment{rep#1}[1]{
 \def\rep@title{#2 \ref{##1}}
 \begin{rep@theorem}}
 {\end{rep@theorem}}}
\theoremstyle{plain}
\newtheorem{thm}{Theorem}
\newtheorem*{Brooks}{Brooks' Theorem}
\newtheorem*{KernelLemma}{Kernel Lemma}
\newtheorem{lem}[thm]{Lemma}
\theoremstyle{definition}
\theoremstyle{remark}
\title{A different short proof of Brooks' theorem}
\author{Landon Rabern}
\newcommand{\IN}{\mathbb{N}}
\newcommand{\set}[1]{\left\{ #1 \right\}}
\newcommand{\card}[1]{\left|#1\right|}
\newcommand{\size}[1]{\left\Vert#1\right\Vert}
\newcommand{\func}[3]{#1\colon #2 \rightarrow #3}
\newcommand{\DefinedAs}{\mathrel{\mathop:}=}
\begin{document}
\begin{abstract}
Lov\'{a}sz gave a short proof of Brooks' theorem by coloring greedily in a good
order. We give a different short proof by reducing to the cubic case.  Then we show how to extend the result to (online) list coloring via the Kernel Lemma.
\end{abstract}
\maketitle

In \cite{Lovasz1975269} Lov\'{a}sz gave a short proof of Brooks' Theorem \cite{brooks1941colouring} by
coloring greedily in a good order. Here we give a different short proof by reducing to the cubic case.  One
interesting feature of the proof is that it doesn't use any connectivity
concepts. Our notation follows Diestel \cite{Diestel} except we write $K_t$ instead of $K^t$ for the complete graph on $t$ vertices.  We'll give two versions of the proof, the first is shorter but uses the extra idea of excluding diamonds ($K_4$ less an edge).

\begin{Brooks}
Every graph satisfies $\chi \leq \max\set{3, \omega, \Delta}$.
\end{Brooks}
\begin{proof}
Suppose the theorem is false and choose a counterexample $G$ minimizing
$\card{G}$.  Put $\Delta \DefinedAs \Delta(G)$. Using minimality of $\card{G}$,
we see that $\chi(G - v) \leq \Delta$ for all $v \in
V(G)$. In particular, $G$ is $\Delta$-regular.

First, suppose $G$ is $3$-regular.  If $G$ contains a diamond $D$, then we may $3$-color $G-D$ and easily extend the coloring to $D$ by first coloring the nonadjacent vertices in $D$ the same.  So, $G$ doesn't contain diamonds. Since $G$ is not a forest it contains an induced cycle $C$. Since $K_4 \not
\subseteq G$ we have $\card{N(C)} \geq 2$. So, we may take different $x, y \in N(C)$ and put $H \DefinedAs G - C$ if $x$ is adjacent to $y$ and $H \DefinedAs (G-C) + xy$ otherwise.  Then, $H$ doesn't contain $K_4$ as $G$ doesn't contain diamonds. By minimality of $\card{G}$, $H$ is $3$-colorable. That is, we have a $3$-coloring of $G - C$ where $x$ and $y$ receive different colors.  We can easily extend this partial
coloring to all of $G$ since each vertex of $C$ has a set of two available
colors and some pair of vertices in $C$ get different sets.  

Hence we must have $\Delta \geq 4$. Consider a $\Delta$-coloring of $G-v$ for some $v \in V(G)$.  Each color must be used on every $K_{\Delta}$ in $G-v$ and hence some color must be used on every $K_{\Delta}$ in $G$.  Let $M$ be such a color class expanded to a maximal independent set.  Then $\chi(G-M) = \chi(G) - 1 = \Delta > \max\set{3, \omega(G-M), \Delta(G-M)}$, a contradiction.
\end{proof}

Here is another version of the proof, not excluding diamonds and doing the reduction differently.

\begin{Brooks}
Every graph $G$ with $\chi(G) = \Delta(G) + 1 \geq 4$ contains
$K_{\Delta(G) + 1}$.
\end{Brooks}
\begin{proof}
Suppose the theorem is false and choose a counterexample $G$ minimizing
$\card{G}$.  Put $\Delta \DefinedAs \Delta(G)$. Using minimality of $\card{G}$,
we see that $\chi(G - v) \leq \Delta$ for all $v \in
V(G)$. In particular, $G$ is $\Delta$-regular.

First, suppose $\Delta \geq 4$.  Pick $v \in V(G)$ and let $w_1, \ldots,
w_\Delta$ be $v$'s neighbors. Since $K_{\Delta + 1} \not \subseteq G$, by
symmetry we may assume that $w_2$ and $w_3$ are not adjacent. Choose a $(\Delta
+ 1)$-coloring $\set{\set{v}, C_1, \ldots, C_\Delta}$ of $G$ where $w_i \in
C_i$ so as to maximize $\card{C_1}$.  Then $C_1$ is a maximal independent set in
$G$ and in particular, with $H \DefinedAs G - C_1$, we have $\chi(H) =
\chi(G) - 1 = \Delta = \Delta(H) + 1 \geq 4$.  By minimality of $\card{G}$, we
get $K_\Delta \subseteq H$.  But $\set{\set{v}, C_2, \ldots, C_\Delta}$ is a
$\Delta$-coloring of $H$, so any $K_\Delta$ in $H$ must contain $v$ and hence
$w_2$ and $w_3$, a contradiction.

Therefore $G$ is $3$-regular.  Since $G$ is not a forest it contains an induced
cycle $C$.  Put $T \DefinedAs N(C)$.  Then $\card{T} \geq 2$ since $K_4 \not
\subseteq G$.  Take different $x, y \in T$ and put $H_{xy} \DefinedAs G - C$ if
$x$ is adjacent to $y$ and $H_{xy} \DefinedAs (G-C) + xy$ otherwise.  Then, by
minimality of $\card{G}$, either $H_{xy}$ is $3$-colorable or adding $xy$
created a $K_4$ in $H_{xy}$.

Suppose the former happens.  Then we have a $3$-coloring of $G - C$
where $x$ and $y$ receive different colors.  We can easily extend this partial
coloring to all of $G$ since each vertex of $C$ has a set of two available
colors and some pair of vertices in $C$ get different sets. 

Whence adding $xy$ created a $K_4$, call it $A$, in $H_{xy}$.  We conclude that
$T$ is independent and each vertex in $T$ has exactly one neighbor in $C$.  Hence
$\card{T} \geq \card{C} \geq 3$. Pick $z \in T - \set{x,y}$.  Then $x$ is
contained in a $K_4$, call it $B$, in $H_{xz}$.  Since $d(x) = 3$, we must have
$A - \set{x,y} = B - \set{x, z}$.  But then any $w \in A - \set{x,y}$ has degree
at least $4$, a contradiction.
\end{proof}

We note that the reduction to the cubic case is an immediate consequence of more
general lemmas on hitting all maximum cliques with an independent set
(see \cite{kostochkaRussian}, \cite{rabernhitting} and \cite{KingHitting}).  H.
Tverberg pointed out that this reduction was also demonstrated in his paper
\cite{tverberg1983brooks}.

\section*{Lifting to (online) list coloring}

In \cite{kostochkayancey2012ore}, Kostochka and Yancey gave a simple, yet powerful, application of the Kernel Lemma to a list coloring problem.  In \cite{orevizing}, we generalized
their idea to get a widely applicable general tool.  As an illustration, we use a special case of this tool to prove the list coloring version of Brooks' Theorem (first proved by Vizing \cite{vizing1976}).
The same argument proves the online list coloring version of Brooks' Theorem, but we will stick to ordinary list coloring for simplicity.

A \emph{kernel} in a digraph $D$ is an independent set $I \subseteq V(D)$ such that each vertex in $V(D) - I$ has an edge into $I$.  A digraph in which every induced subdigraph has a kernel is called \emph{kernel-perfect}.
A simple induction argument shows that kernel-perfect orientations can be very useful for list coloring:

\begin{KernelLemma}
Let $G$ be a graph and $\func{f}{V(G)}{\IN}$. If $G$ has a kernel-perfect orientation such that $f(v) \geq d^+(v) + 1$ for each $v \in V(G)$, then $G$ is $f$-choosable.
\end{KernelLemma}

All bipartite graphs are kernel-perfect, the following lemma from \cite{kostochkayancey2012ore} generalizes this fact.

\begin{lem}\label{KernelPerfect}
Let $A$ be an independent set in a graph $G$ and put $B \DefinedAs V(G) - A$.  Any digraph created from $G$ by replacing each edge in $G[B]$ by a pair of opposite arcs and orienting the edges between $A$ and $B$ arbitrarily is kernel-perfect.
\end{lem}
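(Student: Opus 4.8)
The plan is to establish the formally stronger statement that \emph{every} digraph $D$ of the stated form has a kernel; kernel-perfectness then follows immediately, because restricting to an induced subdigraph $D[S]$ leaves the independent set $A\cap S$ together with its complement $S\setminus A$, all edges inside the complement still doubled and all edges to $A\cap S$ still single, so $D[S]$ is again of the stated form. I would prove this by strong induction on $\card{V(D)}$. Write $A$ for the prescribed independent set and $B \DefinedAs V(D)\setminus A$, so that $D[B]$ is symmetric (every edge of $G[B]$ doubled) and the only single arcs of $D$ run between $A$ and $B$. The two degenerate cases are immediate: if $B=\emptyset$ then $A$ carries no arcs and is its own kernel, and if $A=\emptyset$ then $D$ is symmetric and any maximal independent set of $G$ is a kernel, since each vertex outside it has a neighbour in it and hence a doubled arc into it.

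Next I would peel off two kinds of easy vertices. First, if $D$ has a sink $v$, delete $v$ together with its in-neighbourhood $N^-(v)$; what remains is again of the stated form, so by induction it has a kernel $K$, and $K\cup\set{v}$ is a kernel of $D$ — the vertex $v$ has lost all its neighbours so independence survives, each deleted in-neighbour of $v$ has an arc to $v$, and every other vertex is still absorbed by $K$. Second, given any $a_0\in A$, apply the induction hypothesis to $D-a_0$ to get a kernel $K'$: if $a_0$ has an out-neighbour in $K'$ then $K'$ is already a kernel of $D$, while if $a_0$ has no in-neighbour in $K'$ then $K'\cup\set{a_0}$ is a kernel of $D$. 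Picking $a_0$ to be a vertex of $A$ with no in-neighbour whenever one exists, I may therefore assume that $D$ has no sink and that every vertex of $A$ has both an in-neighbour and an out-neighbour in $B$.

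In the remaining core case I would hunt for the kernel in the shape $I_J \DefinedAs J\cup\set{a\in A : N_G(a)\cap J=\emptyset}$, where $J$ is an independent set of $G[B]$ still to be chosen. Such an $I_J$ is automatically independent in $D$, and a short verification shows it is a kernel precisely when (i) every $b\in B\setminus J$ either has a $G[B]$-neighbour in $J$ or sends an arc to some $a\in A$ with $N_G(a)\cap J=\emptyset$, and (ii) every $a\in A$ with an in-neighbour in $J$ also has an out-neighbour in $J$. Note that if every vertex of $B$ happens to have an out-neighbour in $A$ then $J=\emptyset$ works and $A$ itself is the kernel, which suggests growing $J$ only to cover $B$-vertices that have no out-arc into $A$ while steering each choice — for instance, preferring to add an out-neighbour of any vertex of $A$ that has just gained an in-neighbour in $J$ — so that (ii) is never broken. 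I expect this last step to be the real obstacle: repairing (ii) for one vertex of $A$ may be impossible (all its out-neighbours can be blocked from $J$) and may simultaneously spoil (ii) elsewhere, so one must either find an exchange argument driven by a genuinely monotone quantity, or isolate a stubborn configuration as a strictly smaller digraph of the same form and invoke the induction hypothesis on it.
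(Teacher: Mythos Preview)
Your proposal is incomplete --- you say so yourself --- and the missing idea is precisely what makes the paper's proof a two-line argument. You already noted that if every vertex of $B$ has an out-neighbour in $A$, then $A$ is a kernel. Look at the complementary case: some $v\in B$ has no out-neighbour in $A$. Every $B$-neighbour of $v$ has an arc into $v$ because edges inside $B$ are doubled, and every $A$-neighbour of $v$ has an arc into $v$ because $v$ sends no arc into $A$. Hence $N^-_D(v)=N_G(v)$, and your own ``sink'' step applies verbatim even though $v$ need not be a sink: by induction $D-v-N_G(v)$ has a kernel $K$, and $K\cup\set{v}$ is a kernel of $D$. That is the paper's entire proof.

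So the detour through sources in $A$ and the whole $I_J$ machinery are unnecessary, and the growth/exchange argument you were bracing for never has to happen. The point you missed is that your sink reduction does not actually require $v$ to have zero out-degree; it only requires every neighbour of $v$ to have an arc \emph{into} $v$. For $v\in B$ this is the much weaker condition ``$v$ has no out-arc to $A$'', and that condition is available exactly when $A$ fails to be a kernel. The dichotomy ``$A$ is a kernel'' versus ``some $v\in B$ has no out-arc to $A$'' is exhaustive, so the induction closes immediately.
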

\begin{proof}
In a minimum counterexample $G$, to get a contradiction it suffices to construct a kernel.  But this is easy since either $A$ is a kernel or there is some $v \in B$ which has no out neighbors in $A$.  In the latter case, a kernel in $G - v - N(v)$ together with $v$ is a kernel in $G$.
\end{proof}

\begin{thm}
Every graph satisfies $\chi_l \le \max\set{3, \omega, \Delta}$.
\end{thm}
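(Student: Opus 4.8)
The plan is to imitate the minimal-counterexample arguments above, with the Kernel Lemma — applied through Lemma~\ref{KernelPerfect} — doing the job that greedy extension and the inductive colorings did there. Suppose $G$, together with a list assignment $L$, is a counterexample minimizing $\card{G}$; after shrinking lists we may assume $\card{L(v)} = k \DefinedAs \max\set{3, \omega(G), \Delta(G)}$ for every $v$. As before, minimality makes $G - v$ be $L$-colorable for each $v$, so $G$ has minimum degree at least $k$; since $\Delta(G) \le k$ this forces $G$ to be $k$-regular, whence $k = \Delta(G)$, $\omega(G) \le \Delta(G)$, and in particular $K_{\Delta(G)+1} \not\subseteq G$. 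Write $\Delta \DefinedAs \Delta(G)$, so $G$ is $\Delta$-regular with $\Delta \ge 3$, $K_{\Delta+1}\not\subseteq G$, and $\card{L(v)} = \Delta$ for all $v$.

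Next I would produce a maximal independent set $A$ with the extra property that, putting $B \DefinedAs V(G) - A$, the bipartite graph $H$ of those edges of $G$ with one end in $A$ and one in $B$ has no connected component that is a tree. (Maximality of $A$ already rules out isolated vertices of $H$, i.e.\ $A$ dominates $B$; one also wants $A$ to meet every maximum clique, which by the hitting results cited above can be arranged while keeping $A$ maximal, so that $\omega(G-A)\le\Delta-1$.) Given such an $A$, apply Lemma~\ref{KernelPerfect}: replace each edge of $G[B]$ by a digon and orient the edges of $H$ so that every vertex of $G$ receives at least one incoming $H$-arc — possible precisely because $H$ has no tree component. In the resulting kernel-perfect orientation $d^+(a) \le \Delta - 1$ for $a \in A$, and for $v \in B$ one has $d^+(v) = \Delta - (\text{number of } H\text{-arcs into } v) \le \Delta - 1$. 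So $\card{L(v)} = \Delta \ge d^+(v)+1$ everywhere, and the Kernel Lemma gives that $G$ is $L$-choosable, a contradiction.

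The hard part is producing such an $A$, and this is exactly where I would invoke the general tool of \cite{orevizing}. The constraint $K_{\Delta+1}\not\subseteq G$ is what powers the local exchanges that kill tree components of $H$: a star component corresponds to a vertex $a\in A$ all of whose neighbors meet $A$ only at $a$, and since $N(a)$ is then not a clique one may swap $a$ for two nonadjacent neighbors, enlarging $A$ while keeping it maximum-clique-meeting; the larger tree components need a more careful exchange and this is the substance of the orevizing lemma. (Alternatively one could split off the cubic case exactly as in the second proof above — delete an induced cycle $C$, recurse on $H_{xy}$, and finish $C$ — but the list coloring of the leftover cycle then needs the Kernel Lemma, or the remark that a cycle whose lists all have size at least $2$ and are not all equal is colorable, together with a check that this nondegeneracy really holds.) Finally, since Lemma~\ref{KernelPerfect} and the Kernel Lemma both have online analogues, the same argument proves the online list coloring version.
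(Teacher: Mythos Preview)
Your overall strategy---pick an independent set $A$, orient via Lemma~\ref{KernelPerfect}, apply the Kernel Lemma to all of $G$ at once---is in the right spirit, but there is a genuine gap at the step you yourself flag as ``the hard part'': you never actually produce a maximal independent set $A$ whose $A$--$B$ bipartite graph has no tree component. The star-component exchange you sketch (swap the center $a$ for two nonadjacent neighbors) does not extend to larger trees in any obvious way: since every $a\in A$ has all $\Delta\ge 3$ of its neighbors in $B$, the $A$-vertices of a tree component all have degree $\ge 3$ there and the leaves all lie in $B$, so there is no single vertex whose removal or swap kills the component. And your appeal to ``the general tool of \cite{orevizing}'' misreads that tool: Lemma~\ref{SecondStrengtheningMic} does not manufacture an independent set with prescribed structural properties; given an $A$ with enough cross-edges it outputs an induced \emph{subgraph} $H$ that is $f_H$-choosable. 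That is a different conclusion from the one you need, and in fact it is exactly the conclusion the paper uses.

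The paper sidesteps the difficulty by not asking the orientation to work on all of $G$. It first invokes the ordinary Brooks' Theorem to get $\alpha(G)\ge |G|/\Delta$, so a maximum independent set $A$ sends at least $|G|$ edges to $B$. Rather than massaging $A$, it then passes to a \emph{minimal} induced subgraph $H$ still having at least $|H|$ cross-edges; minimality forces every vertex of $H$ to lie in exactly two cross-edges, so the cross-edges within $H$ form disjoint cycles and can be oriented around, giving every vertex one incoming cross-arc. Now Lemma~\ref{KernelPerfect} and the Kernel Lemma make $H$ $d_H$-choosable, and one finishes by coloring $G-H$ from its lists by minimality of $|G|$ and then extending to $H$. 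The key move you are missing is to drop from $G$ to a well-chosen subgraph rather than to try to repair $A$ globally.
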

\begin{proof}
Suppose the theorem is false and choose a counterexample $G$ minimizing
$\card{G}$.  Put $\Delta \DefinedAs \Delta(G)$. Using minimality of $\card{G}$,
we see that $\chi_l(G - v) \leq \Delta$ for all $v \in
V(G)$. In particular, $G$ is $\Delta$-regular.

By Brooks' Theorem, $\chi(G) \le \Delta$ and hence $\alpha(G) \ge \frac{|G|}{\Delta}$.  Let $A$ be a maximum independent set in $G$ and put $B \DefinedAs V(G) - A$.
Then there are $\alpha(G)\Delta \ge |G|$ edges between $A$ and $B$.  Hence there exist nonempty induced subgraphs $H$ of $G$ with at least $|H|$ edges between $A \cap V(H)$ and $B \cap V(H)$. 
Pick such an $H$ minimizing $|H|$.  For $v \in V(H)$, let $d_v$ be the number of edges between $A \cap V(H)$ and $B \cap V(H)$ incident to $v$. We'll use minimality of $|H|$ to show that $d_v=2$ for all $v \in V(H)$.  Clearly, $d_v \ge 2$.  Suppose $d_v < d_w$ for some other $w \in V(H)$. Then there are at least $|H| + d_w - 2$ edges across and hence removing $v$ violates minimality.  So, $d_v = d_w$ for all $v,w \in V(H)$. Call this common value $d$.  Then there are at least $(d-1)|H|$ edges across.  Since $(d-1)|H| - d \ge |H| - 1$ for $d > 2$, applying minimality shows that $d=2$.  Hence the edges between $A \cap V(H)$ and $B \cap V(H)$ induce a disjoint union of cycles.

Create a digraph $Q$ from $H$ by replacing each edge in $H[B \cap V(H)]$ by a pair of opposite arcs and orienting the edges between $A \cap V(H)$ and $B \cap V(H)$ around their respective cycles.  Then $Q$ is kernel-perfect by Lemma \ref{KernelPerfect}.  Since $d^+(v) \le d(v) - 1$ for each $v \in V(Q)$, applying the Kernel Lemma shows that $H$ is $f$-choosable where $f(v) \DefinedAs d(v)$ for all $v \in V(H)$.  But then given any $\Delta$-list-assignment on $G$, we can color $G-H$ from its lists by minimality of $|G|$ and then complete the coloring to $H$, a contradiction.
\end{proof}

The ad-hoc orientation construction in the above proof can be replaced with the following general lemma.  This lemma follows easily by taking an arbitrary orientation and repeatedly reversing paths if doing so gets a gain (really, this is just the proof of the max-flow min-cut theorem).  This can also be proved by splitting vertices and applying Hall's theorem.

\begin{lem}\label{InOrientations}
Let $G$ be a graph and $\func{g}{V(G)}{\IN}$.  Then $G$ has an orientation such that $d^-(v) \geq g(v)$ for all $v \in V(G)$ iff for every $H \unlhd G$ we have

\[\size{H} + \size{H, G-H} \ge \sum_{v \in V(H)} g(v).\]
\end{lem}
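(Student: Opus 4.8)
The plan is to treat the two directions separately; the ``only if'' direction is a one-line edge count, while the ``if'' direction carries the real content. For necessity, suppose $G$ has an orientation with $d^-(v) \ge g(v)$ for all $v$, and fix $H \unlhd G$. Every edge contributing to $d^-(v)$ for some $v \in V(H)$ is either an edge of $H$ or an edge joining $V(H)$ to $V(G) - V(H)$, and it contributes to the in-degree of at most one vertex of $V(H)$; hence $\sum_{v \in V(H)} g(v) \le \sum_{v \in V(H)} d^-(v) \le \size{H} + \size{H, G-H}$, which is the displayed inequality.

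For sufficiency I would run the path-reversal argument the remark alludes to. Start from an arbitrary orientation of $G$ and, among all orientations, choose one maximizing the monovariant $\sum_{v \in V(G)} \min\set{d^-(v), g(v)}$ (this maximum is attained, being a bounded nonnegative integer). Suppose for contradiction that some $u$ has $d^-(u) < g(u)$. Call $w$ \emph{reachable} if there is a directed path from $u$ to $w$ in the current orientation. If some reachable $w$ has $d^-(w) > g(w)$, reversing every arc of a simple directed $u$--$w$ path raises $d^-(u)$ by one, lowers $d^-(w)$ by one, and leaves every other in-degree unchanged; since $d^-(u) < g(u)$ and $d^-(w) > g(w)$, this strictly increases the monovariant, a contradiction. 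So every reachable $w$ satisfies $d^-(w) \le g(w)$. Let $S$ be the set of reachable vertices and $H \DefinedAs G[S]$, noting $u \in S$. By the definition of reachability no arc runs from $S$ to $V(G) - S$, so every edge counted by $\size{H} + \size{H, G-H}$ is oriented into a vertex of $S$, giving $\size{H} + \size{H, G-H} = \sum_{v \in V(H)} d^-(v)$. Summing $d^-(w) \le g(w)$ over $w \in S$, with strict inequality at $u$, yields $\size{H} + \size{H, G-H} < \sum_{v \in V(H)} g(v)$, contradicting the hypothesis; hence no deficient vertex exists.

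As an alternative to path reversal, one can split each vertex $v$ into $g(v)$ copies and form the bipartite incidence graph whose parts are these copies and $E(G)$, joining each copy of $v$ to every edge of $G$ incident to $v$. A matching saturating all the copies exists iff Hall's condition holds, and since it suffices to verify Hall's condition on subsets of copies that are complete for each vertex they meet, that condition reduces precisely to the displayed inequality (the neighbourhood of the copies of a vertex set $S$ being exactly the edges incident to $S$). Orienting each matched edge toward the vertex whose copy it is matched with, and orienting the remaining edges arbitrarily, produces the required orientation.

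I expect the only delicate point to be the stuck case of the path-reversal argument: one must check that the set $S$ of vertices reachable from a deficient vertex is exactly a set on which the displayed inequality fails, i.e.\ that reachability forces every boundary edge of $S$ inward, so that $\sum_{v \in V(H)} d^-(v)$ equals $\size{H} + \size{H, G-H}$ rather than merely bounding it from below. Termination of the procedure and the verification of the Hall reduction are routine.
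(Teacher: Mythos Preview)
Your proposal is correct and matches the paper's approach exactly: the paper does not write out a proof but only sketches the two methods you carry out---path reversal (framed as max-flow min-cut) and vertex splitting plus Hall's theorem---and your details for both are sound. In particular, your handling of the stuck case (showing that reachability from a deficient vertex forces all boundary edges of $S$ inward, so that $\sum_{v\in S} d^-(v) = \size{H} + \size{H,G-H}$) is precisely the point where the min-cut appears, and your Hall reduction via ``completing'' subsets to full copy-sets is the standard and correct simplification.
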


Using Lemma \ref{InOrientations} in a similar way to the proof of Brooks' Theorem for list coloring above, we get the general tool from \cite{orevizing}.

\begin{lem}\label{SecondStrengtheningMic}
Let $G$ be a nonempty graph and $\func{f}{V(G)}{\IN}$ with $f(v) \leq d_G(v) + 1$ for all $v \in V(G)$. If there is independent $A \subseteq V(G)$ such that

\[\size{A, G-A} \ge  \sum_{v \in V(G)} d_G(v) + 1 - f(v),\]

\noindent then $G$ has a nonempty induced subgraph $H$ that is (online) $f_H$-choosable where $f_H(v) \DefinedAs f(v) + d_H(v) - d_G(v)$ for $v \in V(H)$.
\end{lem}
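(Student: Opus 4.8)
The plan is to reverse-engineer which orientation of the edges across $A$ lets the Kernel Lemma do its work, translate the existence of that orientation into the combinatorial criterion of Lemma \ref{InOrientations}, and then pull out the subgraph $H$ by the same minimality trick used in the list-coloring proof of Brooks' Theorem above. Throughout, write $B \DefinedAs V(G) - A$ and $g(v) \DefinedAs d_G(v) + 1 - f(v) \geq 0$, so the hypothesis reads $\size{A, G - A} \geq \sum_{v \in V(G)} g(v)$, and note $f_H(v) - 1 = d_H(v) - g(v)$ for any induced subgraph $H$.

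First I would isolate the structure that makes a candidate $H$ work. For a nonempty induced subgraph $H$, let $H^\star$ be the bipartite subgraph of $H$ on the edges between $A \cap V(H)$ and $B \cap V(H)$. Suppose $H^\star$ admits an orientation with $d^-(v) \geq g(v)$ for every $v \in V(H)$. Build a digraph $Q$ on $V(H)$ by replacing every edge of $H[B \cap V(H)]$ with a pair of opposite arcs and orienting every edge of $H^\star$ as in that orientation. Then $Q$ is kernel-perfect by Lemma \ref{KernelPerfect}. For $v \in A \cap V(H)$ every edge at $v$ lies in $H^\star$, while for $v \in B \cap V(H)$ the edges of $H[B \cap V(H)]$ at $v$ contribute equally to $d^+_Q(v)$ and $d^-_Q(v)$; either way $d^+_Q(v) = d_H(v) - d^-_{H^\star}(v) \leq d_H(v) - g(v) = f_H(v) - 1$. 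So the Kernel Lemma gives that $H$ is $f_H$-choosable (and its online version gives online $f_H$-choosability, which is why the conclusion is parenthesized).

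It remains to find such an $H$. By Lemma \ref{InOrientations}, $H^\star$ has an orientation with $d^- \geq g$ iff for every $K \unlhd H^\star$ we have $\size{K} + \size{K, H^\star - K} \geq \sum_{v \in V(K)} g(v)$. Since $G$ is nonempty, $H = G$ is a valid start, so I can take $H$ to be a nonempty induced subgraph of $G$ minimizing $|H|$ subject to $\size{H^\star} \geq \sum_{v \in V(H)} g(v)$. I claim this $H$ satisfies the Hall-type inequality. If some $S = V(K) \subseteq V(H)$ violated it, then $S$ is nonempty (the empty set never violates) and proper (for $S = V(H)$ the inequality is exactly the defining condition of $H$); but then deleting $S$ leaves $\size{(H - S)^\star} = \size{H^\star} - \big(\size{K} + \size{K, H^\star - K}\big) > \sum_{v \in V(H)} g(v) - \sum_{v \in S} g(v) = \sum_{v \in V(H) - S} g(v)$, exhibiting a smaller nonempty candidate and contradicting minimality of $|H|$.

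The only labor is bookkeeping — reconciling the orientation requirement $d^-_{H^\star}(v) \geq g(v)$ with the target $f_H(v) = f(v) + d_H(v) - d_G(v)$, and noticing the degenerate cases take care of themselves (e.g.\ $H$ a single vertex is forced only when $f(v) = d_G(v) + 1$, in which case $f_H(v) = 1$). I do not expect a genuine obstacle: this is the $f \equiv \Delta$ argument with Lemma \ref{InOrientations} standing in for the explicit decomposition of the cross-edges into cycles.
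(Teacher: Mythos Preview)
Your proposal is correct and is exactly the argument the paper has in mind: the paper does not give a full proof of Lemma~\ref{SecondStrengtheningMic} but only says to use Lemma~\ref{InOrientations} ``in a similar way to the proof of Brooks' Theorem for list coloring above,'' and your write-up faithfully carries this out---choosing $H$ minimal with $\size{H^\star}\ge\sum_{v\in V(H)} g(v)$, verifying the Hall-type condition of Lemma~\ref{InOrientations} on $H^\star$ via minimality, and then feeding the resulting orientation through Lemma~\ref{KernelPerfect} and the Kernel Lemma. The bookkeeping (in particular $d^+_Q(v)=d_H(v)-d^-_{H^\star}(v)\le d_H(v)-g(v)=f_H(v)-1$ and the nonemptiness of $H-S$) is all handled correctly.
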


In the case of Brooks' theorem, we want $f(v) \DefinedAs d_G(v)$, so the right side of the condition in Lemma \ref{SecondStrengtheningMic} just equals $|G|$ and thus the existence of the desired $A$ follows immediately from
$\alpha(G) \ge \frac{|G|}{\Delta}$.  More generally, as shown in \cite{orevizing}, the classification of (online) degree-choosable graphs is a quick corollary of Lemma \ref{SecondStrengtheningMic}.

\end{document}